 \newtheorem{theorem}{Theorem}[section]
 \newtheorem{corollary}[theorem]{Corollary}
  \newtheorem{conjecture}[theorem]{Conjecture}
 \newtheorem{lemma}[theorem]{Lemma}
\theoremstyle{definition}
\theoremstyle{remark}
\newtheorem{fact*}{Fact}
\newcommand{\til}{\raise.17ex\hbox{$\scriptstyle\mathtt{\sim}$}}
\newcommand\beq{\begin{equation}}
\newcommand\eeq{\end{equation}}
\newcommand\bbm{\begin{bmatrix}}
\newcommand\ebm{\end{bmatrix}}
\newcommand\bpm{\begin{pmatrix}}
\newcommand\epm{\end{pmatrix}}
\numberwithin{equation}{section}
\newlength{\Mheight}
\newlength{\cwidth}
\newcommand{\dfn}[1]{{\bf #1}\index{#1}}
\title[The wedge-of-the-edge theorem]{The wedge-of-the-edge theorem: edge-of-the-wedge type phenomenon within the common real boundary}
\author{
	J. E. Pascoe
}
\address{
J. E. Pascoe \\	
	Department of Mathematics\\
  Washington University in St. Louis\\
  One Brookings Drive \\
 St. Louis, MO 63130}
\email[J. E. Pascoe]{pascoej@math.wustl.edu}
\thanks{
Partially supported by National Science Foundation Mathematical
Science Postdoctoral Research Fellowship  
DMS 1606260}
\date{\today}
\subjclass[2010]{Primary 32A40}
\begin{document}

\begin{abstract}
The edge-of-the-wedge theorem in several complex variables gives the analytic continuation of functions defined on the poly upper half plane and the poly lower half plane, the set of points in $\mathbb{C}^d$ with all coordinates in the upper and lower half planes respectively, through a set in real space, $\mathbb{R}^d.$
The geometry of the set in the real space  can force the function to analytically continue within the boundary itself, which is qualified in our wedge-of-the-edge theorem. For example, if a function extends to the union of two cubes in $\mathbb{R}^d$ which are positively oriented, with some small overlap, the functions must analytically continue to a neighborhood of that overlap of a fixed size not depending of the size of the overlap.
\end{abstract}
\maketitle


\section{Introduction}
Let $\Pi$ denote the open upper half plane in $\mathbb{C}.$
That is, 
$$\Pi = \{z \in \mathbb{C}|  \textrm{ Im } z > 0\}.$$
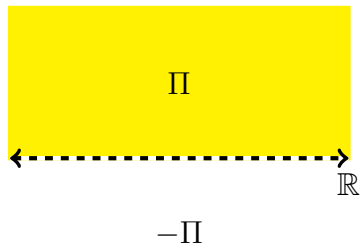
\begin{figure}[H]
\begin{tikzpicture}[xscale=.75]
		\draw [fill=lightgray,ultra thick,yellow] (0,0) rectangle (6,2);
		\draw [ultra thick,white] (0,0) -- (6,0);
		\draw [<->,dashed, ultra thick] (0,0) -- (6,0);
		\node at (3,1) {$\Pi$};
		\node at (3,-1) {$-\Pi$};
		
		\node at (6,-.35) {$\mathbb{R}$};
	
	\end{tikzpicture}
	\caption{A picture of $\Pi$.}
\end{figure}

Let $U \subseteq \mathbb{R}$ be an open set.
One can show using elementary complex analysis, for example Morera's theorem, that any continuous map
$f: \Pi \cup U \cup -\Pi \rightarrow \mathbb{C}$
which is analytic on $\Pi \cup -\Pi$ is indeed analytic on
the whole of $\Pi \cup U \cup -\Pi.$ When $f|_U$ is real-valued,
this observation is the essential part of the proof of the Schwarz reflection principle.

The analogue in several variables was proven by Bogoliubov in 1956 and is known as \emph{the edge-of-the-wedge theorem.} A quality yet concise and fully comprehensible discussion was given by Rudin in \emph{Lectures on the edge-of-the-wedge theorem}\cite{rudeow}. The edge-of-the-wedge theorem is an important stem theorem in several complex variables.
\begin{theorem}[The edge-of-the-wedge theorem]
Let $U \subseteq \mathbb{R}^n$ be an open set.
There is an open set $D$ in $\mathbb{C}^n$ containing $\Pi^n \cup U \cup -\Pi^n$
such that every function $f: \Pi^n \cup U \cup -\Pi^n \rightarrow
\mathbb{C}$ which is analytic on $\Pi^n \cup -\Pi^n$ analytically continues to $D.$
\end{theorem}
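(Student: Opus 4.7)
The plan is to prove the theorem locally: for each $x_0 \in U$ I would construct an analytic extension of $f$ to some polydisc neighborhood of $x_0$ in $\mathbb{C}^n$, and then obtain the open set $D$ as the union of $\Pi^n$, $-\Pi^n$, and these polydiscs. Fix $x_0 \in U$ and, after shrinking, assume the closed polycube $x_0 + [-2r,2r]^n$ lies in $U$. The starting point is the one-variable case: taking $\gamma^\pm$ to be the upper and lower semicircular arcs from $x_0-r$ to $x_0+r$, the integral
\[
\widetilde F(z) = \frac{1}{2\pi i}\left[\int_{\gamma^+}-\int_{\gamma^-}\right]\frac{f(w)}{w-z}\,dw
\]
is holomorphic on the open disc around $x_0$ and, by Cauchy's integral formula together with continuity of $f$ across $U$, equals $f$ on the intersection of the disc with $\Pi\cup -\Pi$. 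This is the Morera-style argument underlying Schwarz reflection.

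For $n \ge 2$ the naive iteration of this construction fails: a mixed product contour such as $\gamma_1^+\times\gamma_2^-$ visits points $(w_1,w_2)$ with $\IM w_1 > 0$ and $\IM w_2 < 0$, where $f$ has no value, and no continuous deformation of such a contour onto the real polysegment stays inside the domain $\Pi^n\cup U\cup -\Pi^n$ of $f$. I would therefore work at the level of boundary distributions. After localization, the trace of $f$ on $\mathbb{R}^n$ coming from $\Pi^n$ is the inverse Fourier transform of a distribution $g^+$ supported in the closed positive octant, and the trace from $-\Pi^n$ is the inverse Fourier transform of a distribution $g^-$ supported in the closed negative octant — this is the Paley--Wiener--Schwartz theorem for tubes over convex cones. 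The hypothesis that $f$ is continuous across $U$ says that $g^+-g^-$ has inverse Fourier transform vanishing on the open set $U$. A support/quasi-analyticity argument then forces $g^+$ and $g^-$ to assemble into a single distribution whose Fourier--Laplace transform is holomorphic on a complex neighborhood of $x_0$; that transform is the desired local extension of $f$.

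I expect the main obstacle to be the support step — turning local agreement of two boundary-value distributions on $U$ into the global conclusion that their spectra combine into a single Fourier--Laplace extension holomorphic on a full complex neighborhood of $U$. This is a Phragmén--Lindelöf-flavored rigidity argument for distributions whose Fourier transforms live in a union of closed opposite cones. Once it is in hand, identifying the extension with $f$ on $\Pi^n$ and on $-\Pi^n$ is a routine uniqueness argument for Fourier--Laplace representations on tubes, and gluing the local extensions over a cover of $U$ yields the open set $D$ and the global analytic continuation.
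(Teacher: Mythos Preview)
The paper does not give a proof of this statement. The edge-of-the-wedge theorem is quoted there as a classical result of Bogoliubov, with Rudin's \emph{Lectures on the edge-of-the-wedge theorem} cited for a complete treatment; it serves only as background for the paper's own wedge-of-the-edge theorem. So there is nothing in the paper to compare your proposal against.

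On the substance of your proposal: the outline you describe is in the spirit of one of the standard distributional approaches, but as written it is a plan rather than a proof, and the gap you yourself flag is essentially the whole theorem. Two concrete points. First, to invoke Paley--Wiener--Schwartz and assert that the boundary trace from $\Pi^n$ is the inverse Fourier transform of a tempered distribution supported in the closed positive orthant, you need $f$ to have at most polynomial growth in $\Pi^n$; the hypothesis here is bare continuity on $\Pi^n\cup U\cup -\Pi^n$, so you must first localize and multiply by cutoffs, and then track carefully that the cutoff does not destroy the cone-support conclusion. Second, and more seriously, the ``support/quasi-analyticity'' step --- that $\widehat{g^+}-\widehat{g^-}$ vanishing on $U$ forces $g^+$ and $g^-$ to glue into a single distribution whose Fourier--Laplace transform is holomorphic on a full complex neighborhood of $U$ --- is not a known lemma you can cite and is exactly the nontrivial content of the edge-of-the-wedge theorem in this language. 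Labelling it a Phragm\'en--Lindel\"of-flavored rigidity argument does not supply the mechanism. If you want to pursue this route, the honest thing is to follow one of the existing proofs (Rudin gives several, including an elementary integral-formula argument that avoids distribution theory entirely) rather than to reconstruct the hard step from scratch.
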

Some degustation of the edge-of-the-wedge theorem is needed before we proceed. Two some what remarkable notes are:
\begin{enumerate}
\item The set $\Pi^d\cup U \cup -\Pi^d$ is not open. That is, the open set $D$ containing $\Pi^n \cup U \cup -\Pi^n$ must intersect the exterior, somehow leaking out the sides in a big way. For example, when $n =2,$ this means
$D \cap \Pi \times -\Pi$ and $D \cap -\Pi \times \Pi$ must be nonempty. 
\begin{figure}[H]\label{figeow}
\begin{tikzpicture}[xscale=.5,yscale=.5]	
	\draw [fill, yellow] (0,0) rectangle (4,4); 
	\draw [fill, yellow] (0,0) rectangle (-4,-4); 
	\draw [dashed,ultra thick, <->] (-4,0) -- (4,0); 
	\draw [dashed,ultra thick, <->] (0,-4) -- (0,4);  
	\draw [fill] (0,0) circle [radius=0.1];
	\node [right] at (4,-.5) {$\text{Im } z$};
			\node [below right] at (0,0) {$U$};
	\node [above right] at (0,4) {$\text{Im } w$};
	\node at (2,2) {$\Pi^2$};
	\node at (-2,-2) {$-\Pi^2$};
	\node at (2,-2) {$\Pi \times -\Pi$};
	\node at (-2,2) {$-\Pi \times \Pi$};
	
	\draw [fill, yellow] (12,0) rectangle (16,4); 
	\draw [fill, yellow] (12,0) rectangle (8,-4); 
	\draw [dashed,ultra thick, <->] (8,0) -- (16,0); 
	\draw [dashed,ultra thick, <->] (12,-4) -- (12,4);  
	\draw [fill,yellow] (12,0) circle [radius=1];
	\draw [ultra thick, dashed] (12,0) circle [radius=1];
	\node at (14,2) {$\Pi^2$};
	\node at (10,-2) {$-\Pi^2$};
	\node at (14,-2) {$\Pi \times -\Pi$};
	\node at (10,2) {$-\Pi \times \Pi$};
\node at (12,0) {$D$};	
	
	\node at (6,0) {$\Rightarrow$};
	\end{tikzpicture}
	\caption{A projection of the edge-of-the-wedge theorem onto the imaginary axes. Note that all of $U$ appears as a point in this projection.}
	\end{figure}
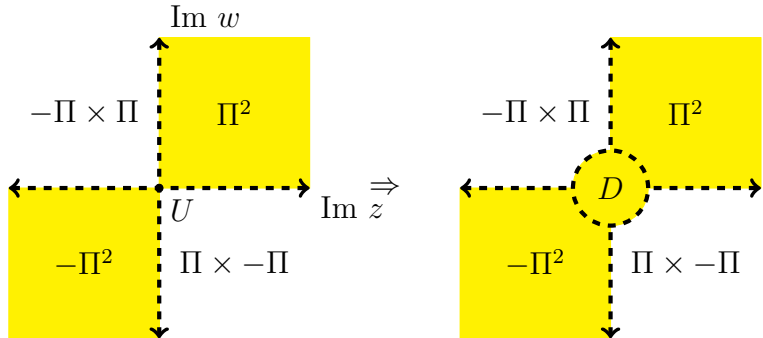
Thus, we can imagine the result is somewhat deeper than in the one variable. Additionally, we are presented with a mystery: what is the geometry of the maximal $D?$
\item The set $D$ is independent of the function $f.$ On face, this fact is somewhat surprising, but in hindsight, and in the light of Montel type theorems, how could it be otherwise.
\end{enumerate}
The metaphor evoked in the phrasing \emph{edge-of-the-wedge} merits some additional discussion. The phrase \emph{thin edge of the wedge} refers to something minor with major, often spurious, implications, evoking the classical use of a metal wedge to split logs for firewood. The continuous continuation through the superficially thin looking set $U$ between the wedges $\Pi^n$ and $-\Pi^n$ gives way to the major analytic continuation throughout the fatter set $D.$

We note that various generalizations of the edge-of-the-wedge theorem to weaker distributional notions of agreement between the two wedges, for example \cite{EPSTEIN}. Rudin's book \cite{rudeow} gives some other generalizations and some more modern surveys include\cite{RUSSSURV}. Our goals are somewhat orthogonal to these matters: during our current enterprise, we will consider the geometry of such a continuation once we already know that it exists. That is, given
$U,$ the edge-of-the-wedge theorem automatically manufactures a set $D$ and we would like to understand how $U$ shapes $D.$

 Specifically, we will be particularly interested in $D \cap \mathbb{R}^d.$ In principle, we might be tempted to assume we are forced to have $D\cap \mathbb{R}^n = U$ which is often the case-- for example, when $U$ is  a cube, that is, $U = (-1,1)^n.$
When $n=1,$ one may simply take  take $x_n$ a dense sequence in $(-1,1)^c$ and define 
$$f(z) = \sum \frac{2^{-n}}{z-x_n},$$
and for $n>1$ products of such functions will suffice.

While a general qualitative description of $D$ remains somewhat elusive, when $U$ has some nontrivial geometry in several variables, the situation is more exciting.  Under certain nice geometric conditions, specifically if $U$ is the union of two cubes with a small overlap which are positively oriented with respect to each other, we will see that our continuation is somewhat larger than $U$ within $\mathbb{R}^d.$ This is the content of our wedge-of-the-edge theorem.
\begin{theorem}[The wedge-of-the-edge theorem for hypercubes]\label{wedgecubes}
	There is an open set $D$ containing $0$ such that for any $\varepsilon>0,$
	every continuous
	function $f:\Pi^d \cup (-1,\varepsilon)^d \cup (-\varepsilon,1)^d \cup -\Pi^d \rightarrow \mathbb{C}$
	which is
 analytic on $\Pi^d\cup -\Pi^d,$
	analytically continues to $D.$
	\end{theorem}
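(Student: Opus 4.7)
My approach will proceed in two stages: first, glue analytic extensions obtained separately from each cube; second, use the positive orientation of the cubes together with a continuity principle to upgrade to a fixed neighborhood of the origin.

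For the first stage, I would apply the edge-of-the-wedge theorem to $\Pi^d \cup (-1,\varepsilon)^d \cup -\Pi^d$ and separately to $\Pi^d \cup (-\varepsilon,1)^d \cup -\Pi^d$, obtaining analytic extensions of $f$ to open sets $W_1, W_2 \subseteq \mathbb{C}^d$ containing the respective cubes. These two extensions coincide with $f$ on the wedges $\Pi^d \cup -\Pi^d$, so the identity theorem glues them into a single analytic extension on $\Omega_\varepsilon := W_1 \cup W_2 \cup \Pi^d \cup -\Pi^d$. The theorem then reduces to showing $\Omega_\varepsilon$ contains a fixed $\mathbb{C}^d$-neighborhood of $0$ independent of $\varepsilon$.

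The key new geometric input beyond edge-of-the-wedge on each cube is the positive orientation. For any $v$ in the open positive orthant of $\R^d$ with $\max_i |v_i| < 1$, the real segment $\{tv : t \in (-1,1)\}$ lies inside $(-1,\varepsilon)^d \cup (-\varepsilon,1)^d$, because $tv \in (-\varepsilon,1)^d$ when $t \ge 0$ and $tv \in (-1,\varepsilon)^d$ when $t \le 0$. Restricting $f$ along the complex line through $0$ in the direction $v$ yields $g_v(t) := f(tv)$, which is analytic for $\IM t > 0$ (since then $tv \in \Pi^d$ as all $v_i > 0$), analytic for $\IM t < 0$ (since then $tv \in -\Pi^d$), and continuous on the real interval $(-1,1)$. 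The one-dimensional edge-of-the-wedge theorem then continues $g_v$ analytically to $\mathbb{C} \setminus ((-\infty,-1] \cup [1,\infty))$, which is a neighborhood of $0$ of fixed size in $\mathbb{C}$. The symmetric statement holds for $v$ in the open negative orthant.

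The final step is to promote these one-dimensional line extensions, in concert with the joint analyticity on $\Pi^d \cup -\Pi^d$ and the joint continuity on the cubes, to a genuine jointly analytic extension on a polydisc at $0$ of fixed size. My plan is to do this via a Kontinuit\"atssatz argument: construct a continuous family of analytic discs $\phi_s: \overline{\mathbb{D}} \to \mathbb{C}^d$ whose boundaries lie in $\Omega_\varepsilon$ (using the wedges and the 1D line extensions above) and whose centers $\phi_s(0)$ sweep out a fixed $\mathbb{C}^d$-neighborhood of $0$, then invoke the Cauchy integral formula applied to $f \circ \phi_s$ to define and verify the joint extension. A viable alternative is to apply Bochner's tube theorem after identifying, inside $\Omega_\varepsilon$, a tubular subdomain whose connected base has a convex hull that swallows a uniform imaginary neighborhood of the origin.

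The main obstacle will be this last upgrade. A naive distance-to-boundary estimate for $\Omega_\varepsilon$ at the origin only yields a polydisc of radius $\sim \varepsilon$, which vanishes as $\varepsilon \to 0$; beating this scaling requires genuinely exploiting the full-dimensional orthant cone of 1D line extensions produced in the previous step---a feature unavailable for a single cube alone---and arranging the disc family (or the tube) so that the $\varepsilon$-dependence introduced in the separate applications of edge-of-the-wedge is absorbed and does not re-appear in the size of the final neighborhood $D$.
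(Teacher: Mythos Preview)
Your proposal correctly identifies that the whole difficulty lies in the ``upgrade'' step, but the tools you suggest for it do not actually carry the weight. First, the one-variable slice argument is essentially vacuous as a domain-enlarging device: for $v$ in the open positive orthant the complex line $\{tv:t\in\C\}$ already sits inside $\Pi^d\cup\R^d\cup(-\Pi^d)$, so the analytic continuation of $g_v$ recovers no new points of $\C^d$ beyond those already in $\Omega_\varepsilon$. Second, the Bochner tube alternative cannot work as stated: a tube $\R^d+iB\subset\Omega_\varepsilon$ forces, for every $y\in B$, that $x+iy\in\Omega_\varepsilon$ for all $x\in\R^d$, and this fails whenever $y\notin(\R^{>0})^d\cup(\R^{<0})^d$ since $W_1,W_2$ are bounded in the real directions; hence the maximal tube base is the disconnected set $(\R^{>0})^d\cup(\R^{<0})^d$ and Bochner gives nothing. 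Third, the Kontinuit\"atssatz route would require you to exhibit a family of analytic discs whose boundaries stay inside $\Omega_\varepsilon$ while the centers sweep a ball of $\varepsilon$-independent radius, and you do not construct such a family; near $0$ the set $\Omega_\varepsilon$ only protrudes by order $\varepsilon$ into the ``mixed'' octants $\pm\Pi\times\mp\Pi\times\cdots$, so any naive disc family collapses with $\varepsilon$.

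The paper takes an entirely different route that sidesteps the envelope-of-holomorphy geometry. Edge-of-the-wedge gives a power series $f(z)=\sum_d h_d(z)$ at $0$, and the slice observation you made is used not to enlarge the domain but to show that $\sum_d|h_d(x)|<\infty$ for every $x$ in the positive orthant piece $W$ of the real cube. A Baire-type exhaustion then produces a Borel set $S\subset[0,1]^n$ of measure $\ge p>0$ on which $|h_d|\le N_0$ for all $d$. The heart of the proof is a purely polynomial lemma: any degree-$d$ polynomial bounded by $1$ on such an $S$ has $\ell^1$ coefficient norm at most $KC^d$, with $K,C$ depending only on $p$; this is proved by induction on the number of variables via Lagrange interpolation on well-separated slices. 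This yields $|h_d(z)|\le KC^d\|z\|^d$, hence convergence of the power series on a ball of radius $1/C$ independent of $\varepsilon$. If you want to salvage your outline, the missing idea is precisely this quantitative control on the homogeneous components; the geometric continuation principles you invoke do not by themselves see the positive-orientation hypothesis strongly enough to beat the $\varepsilon$ scaling.
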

Theorem \ref{wedgecubes}	follows from the more general Theorem \ref{wedgefull}. In fact, $D$ can be taken to contain a uniform ball around $0$ which we discuss in Subsection \ref{uniformity}.
	
		The geometric situation in the wedge-of-the-edge theorem has 
	$(-1,\varepsilon)^d$ and $(-\varepsilon,1)^d,$ pieces of the edge, emulating the roles of the wedges $\Pi^d$ and $-\Pi^d$ in the classical edge-of-the-wedge theorem. Hence the name wedge-of-the-edge theorem.
	
	\begin{figure}[H]
		\begin{tikzpicture}[xscale=.5,yscale=.5]	
	\draw [fill, yellow] (-.2,-.2) rectangle (4,4); 
	\draw [fill, yellow] (.2,.2) rectangle (-4,-4); 
	\draw [dashed,ultra thick] (-.2,-.2) rectangle (4,4); 
	\draw [dashed,ultra thick] (.2,.2) rectangle (-4,-4); 
	\draw [fill, yellow] (11.8,-.2) rectangle (16,4); 
	\draw [fill, yellow] (12.2,.2) rectangle (8,-4); 
	\draw [dashed,ultra thick] (11.8,-.2) rectangle (16,4); 
	\draw [dashed,ultra thick] (12.2,.2) rectangle (8,-4); 
	\draw [fill,yellow] (12,0) circle [radius=1];
	\draw [ultra thick, dashed] (12,0) circle [radius=1];	
	\node at (6,0) {$\Rightarrow$};
	\node at (12,0) {$D$};
	\end{tikzpicture}
	\caption{A pictographic representation of the wedge-of-the-edge theorem. Note the visual analogy with the edge-of-the-wedge theorem.}
	\end{figure}
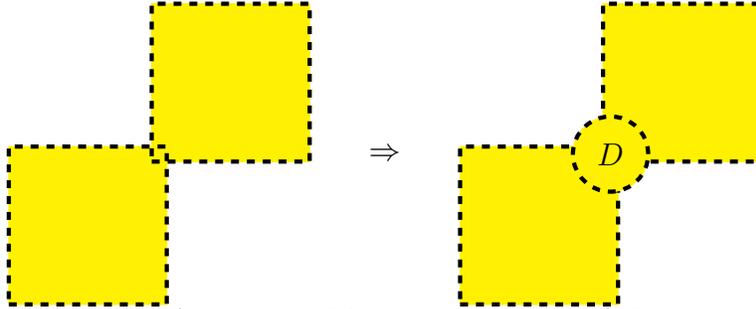

Several examples showing the necessity of the orientation and small $\varepsilon$ overlap. The function
	 $1/(1-tzw)$ can fit two squares in the opposite orientation with a small overlap in the opposite orientation as in the wedge-of-the-edge theorem, but cannot be analytically continued to a uniform neighborhood of $0$ for large $t$. The function $\sqrt{zw}$ cannot analytically continue through the point $0,$ demonstrating the necessity of the small overlap.  Our qualitative understanding is still incomplete. For example, we have not eliminated the possibility that $D$ cannot be taken to contain the whole $(-1,1)^d.$
	 \begin{figure}[H]
	 \begin{tikzpicture}[xscale=.5,yscale=.5]	
	\draw [fill, yellow] (-.2,.2) rectangle (4,-4); 
	\draw [fill, yellow] (.2,-.2) rectangle (-4,4); 
	\draw [dashed,ultra thick] (-.2,.2) rectangle (4,-4); 
	\draw [dashed,ultra thick] (.2,-.2) rectangle (-4,4); 
	\draw [very thick] (.5,4) to (.5,2) to (.75,1.3) to (1,1) to (1.3,.75) to (2,.5) to
 (4,.5);
	\draw [purple] (.5,4) to (.5,2) to (.75,1.3) to (1,1) to (1.3,.75) to (2,.5) to
 (4,.5);
 	\draw [very thick] (-.5,-4) to (-.5,-2) to (-.75,-1.3) to (-1,-1) to
 	(-1.3,-.75) to (-2,-.5) to
 (-4,-.5);
 	\draw [purple] (-.5,-4) to (-.5,-2) to (-.75,-1.3) to (-1,-1) to
 	(-1.3,-.75) to (-2,-.5) to
 (-4,-.5);
 \end{tikzpicture}
 \begin{tikzpicture}[xscale=.5,yscale=.5]	
   \draw [fill, white] (0,0) rectangle (-8,4); 
	\draw [fill, yellow] (0,0) rectangle (4,4); 
	\draw [fill, yellow] (0,0) rectangle (-4,-4); 
	\draw [ultra thick] (0,0) rectangle (4,4); 
	\draw [ultra thick] (0,0) rectangle (-4,-4); 
	\end{tikzpicture}
	\caption{Figures of the domains of $1/(1-tzw)$ and $\sqrt{zw},$ respectively. In the figure of $1/(1-tzw),$ the curve represents the singular set of the function, which as $t$ goes to infinity approaches the coordinate axes. }
	\end{figure}
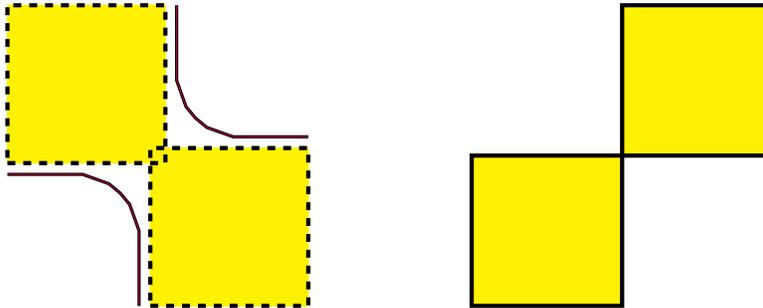

	An interesting matter to consider is the large rescaling limit of the wedge-of-the-edge theorem. That is, functions which are continuously defined on the entire positive and negative orthants plus some small overlap and the upper and lower multivariate half planes are entire. 
\begin{corollary}[The limiting wedge-of-the-edge theorem]
	For any $\varepsilon>0,$
	every continuous
	function $f:\Pi^d \cup (-\infty,\varepsilon)^d \cup (-\varepsilon,\infty)^d \cup -\Pi^d \rightarrow \mathbb{C}$
	which is
	 analytic on $\Pi^d\cup -\Pi^d,$
	analytically continues to all of $\mathbb{C}^n.$
	\end{corollary}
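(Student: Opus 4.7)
The plan is to reduce the limiting corollary to Theorem \ref{wedgecubes} by a scaling argument. For each $R>0$, consider the dilated function $f_R(z):=f(Rz)$. The real dilation $z\mapsto Rz$ preserves $\Pi^d$ and $-\Pi^d$ and maps $(-\infty,\varepsilon)^d$ to $(-\infty,\varepsilon/R)^d$ and $(-\varepsilon,\infty)^d$ to $(-\varepsilon/R,\infty)^d$; consequently $f_R$ is continuous on $\Pi^d\cup(-\infty,\varepsilon/R)^d\cup(-\varepsilon/R,\infty)^d\cup-\Pi^d$ and analytic on $\Pi^d\cup-\Pi^d$.

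For $R>\varepsilon$, restricting $f_R$ to $\Pi^d\cup(-1,\varepsilon/R)^d\cup(-\varepsilon/R,1)^d\cup-\Pi^d$ puts us in the situation of Theorem \ref{wedgecubes} with overlap parameter $\varepsilon/R$. That theorem produces a single open set $D$ containing $0$---crucially, the same $D$ for every value of the overlap, hence independent of $R$---to which $f_R$ analytically continues. Undoing the dilation, $f$ itself extends analytically to $RD:=\{Rz:z\in D\}$ for each $R>\varepsilon$. Since $D$ is open and contains $0$, it contains some ball $B(0,\delta)$; hence $RD\supseteq B(0,R\delta)$, and letting $R\to\infty$ these dilates exhaust all of $\mathbb{C}^d$.

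Finally, the various extensions on the nested domains $RD$ must glue into a single entire function. This is pure uniqueness of analytic continuation: each extension restricts to the original $f$ on the connected open set $\Pi^d$, so by the identity theorem any two of them coincide on the connected component of their common domain that meets $\Pi^d$. The only real ingredient beyond scale-invariance of the hypotheses is the observation that $D$, being an open neighborhood of $0$, automatically carries a ball of positive radius---this is what converts the fixed region $D$ from Theorem \ref{wedgecubes} into the exhaustion of $\mathbb{C}^d$ in the limit. I do not anticipate any serious obstacle beyond this bookkeeping.
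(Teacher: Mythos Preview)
Your scaling argument is correct and is exactly the approach the paper intends: the corollary is stated without proof, but the preceding sentence calls it ``the large rescaling limit of the wedge-of-the-edge theorem,'' which is precisely what you carry out. One small cosmetic point: the sets $RD$ need not be nested (nothing forces $D$ to be star-shaped), but you already observe that the balls $B(0,R\delta)\subseteq RD$ are, and restricting to those suffices for the gluing.
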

	
	We now give a nice, somewhat amusing corollary of the main result along the lines of Hartog's theorem. If we have a function defined on a  upper and lower multivariate upper half plane and a region in $\mathbb{R}^d$ missing a single point, we see that it must continue to that point.
	\begin{corollary}[A weak Hartog's type theorem]\label{hartogs}
	Let $E \subseteq \mathbb{R}^d$ be an open set.
	Let $p \in E.$
	If $f:\Pi^d \cup (E\setminus \{p\}) \cup -\Pi^d \rightarrow \mathbb{C}$
	which is continuous and analytic on the interior of its domain,
	then,  it has a continuous extension to $E.$	
	\end{corollary}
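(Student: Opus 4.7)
The plan is to analytically continue $f$ to a punctured complex ball around $p$ by repeated application of the wedge-of-the-edge theorem at points near but distinct from $p$, and then to use Hartogs' extension theorem to fill in $p$ itself. Note that the statement tacitly requires $d\geq 2$: in one variable, $f(z) = 1/z$ is continuous on $\Pi\cup(\R\setminus\{0\})\cup -\Pi$ and analytic on the interior of its domain but does not extend continuously to $0$, and accordingly the higher-dimensional nature of Hartogs' theorem is essential.

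First translate so that $p = 0$, and pick $r > 0$ with $(-r,r)^d \subset E$. For each $q \in (-r,r)^d \setminus \{0\}$ I would apply a translated and rescaled version of the wedge-of-the-edge theorem---Theorem \ref{wedgecubes}, or more flexibly its general version Theorem \ref{wedgefull}---centered at $q$. Specifically, because $q \neq 0$ and $E$ is open, one can choose $R > 0$ small and $\varepsilon > 0$, together with a coordinatewise reflection of the cubes matching the orthant of $q$, so that the two translated cubes lie in $E\setminus\{0\}$. The theorem then extends $f$ analytically to $V_q := q + R\cdot D$, where $D$ is the uniform complex neighborhood of the origin produced by the theorem. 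Uniqueness of analytic continuation makes these local extensions pairwise consistent, and together with $\Pi^d$ and $-\Pi^d$ they glue into a single holomorphic function $\tilde f$ on an open set $\Omega \subset \C^d$.

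The crux is showing that $\Omega$ contains a punctured complex ball $B(0,\eta)\setminus\{0\}$ for some $\eta > 0$. Given $z = x + iy$ close to $0$ with $z \neq 0$, I would argue by cases: if all components of $y$ share a sign then $z \in \Pi^d \cup -\Pi^d$; otherwise, if $x\neq 0$, one picks $q$ roughly proportional to $x$ with $R$ of the same order as $|x|_\infty$, so that $z \in q + R\cdot D$ by using that $D$ contains a uniform ball of some fixed radius $\rho > 0$ around the origin; the residual cases (essentially purely imaginary $z$ with mixed signs of $y$) are covered by placing $q$ along a coordinate axis and invoking the orientation freedom in Theorem \ref{wedgefull}. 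Matching the scale $R$ against $|q|_\infty$ while still forcing $z\in V_q$ is the most delicate quantitative step, and is where the uniform lower bound on $D$ discussed in Subsection \ref{uniformity} is indispensable.

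Once $B(0,\eta)\setminus\{0\}$ is covered, Hartogs' extension theorem---valid in $\C^d$ with $d\geq 2$ for isolated compact singularities---extends $\tilde f$ holomorphically across $\{0\}$. The extension is automatically continuous, agrees with the original $f$ on $(\Pi^d \cup (E\setminus\{0\}) \cup -\Pi^d) \cap B(0,\eta)$ by uniqueness of analytic continuation, and patches with $f$ on the remainder of its domain to produce a continuous function on $\Pi^d \cup E \cup -\Pi^d$, whose restriction to $E$ is the desired continuous extension.
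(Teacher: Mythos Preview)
Your detour through a punctured complex ball and the classical Hartogs theorem is unnecessary, and the covering step has a real obstruction as you have set it up. With the orientation ``matching the orthant of $q$,'' the cube $q+(-R,R\varepsilon)^d$ (after reflecting $q$ into the positive orthant) contains $0$ as soon as every $q_i<R$, so avoiding $0$ forces $R\le |q|_\infty$. Then $V_q$ has radius at most $|q|_\infty\,\rho$, while for $z=iy$ with $y$ of mixed sign one has $|z-q|\ge |q|\ge |q|_\infty$; covering such $z$ would require $\rho\ge 1$, which nothing in the paper provides---the interpolation bound yields a convergence radius $1/C$ with $C$ large, and the text explicitly leaves open whether $D$ even contains $(-1,1)^d$. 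Your appeal to ``orientation freedom'' only helps if you switch to a reflection that does \emph{not} match the orthant of $q$, and once you do that the entire Hartogs apparatus becomes superfluous, because a single such $V_q$ already swallows $p$.

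That single application is exactly the paper's argument. Translate $p$ to $0$, fix $R>0$ small enough that the relevant cubes sit inside $E$, and choose the center $q$ near $0$ with coordinates of \emph{mixed} sign, say $q=(\delta,-\delta,0,\ldots,0)$; this is where $d\ge 2$ enters. For $\varepsilon<\delta/R$ the cube $q+(-R,R\varepsilon)^d$ misses $0$ because $q_2=-\delta\notin(-R\varepsilon,R)$, and $q+(-R\varepsilon,R)^d$ misses $0$ because $q_1=\delta\notin(-R,R\varepsilon)$; hence both cubes lie in $E\setminus\{0\}$ with $R$ held \emph{fixed}. Theorem~\ref{wedgecubes} then continues $f$ to $q+R\cdot D$, and since $R\cdot D$ has size independent of $\varepsilon$ while $\delta$ may be taken arbitrarily small, $0\in q+R\cdot D$. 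One shot, no punctured ball, no external Hartogs theorem.
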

The following sketch gives a picture of the proof. The formal details are left to the reader.

\begin{figure}[H]
		\begin{tikzpicture}[xscale=.5,yscale=.5]	
	\draw [fill, cyan] (-4,-4) rectangle (4,4);
	\draw [fill, cyan] (8,-4) rectangle (16,4);
	\node at (-3,3) {$E$};
	\draw [fill,  blue] (-.2,-.2) rectangle (3,3); 
	\draw [fill,  blue] (.2,.2) rectangle (-3,-3); 
	\draw [dashed,ultra thick] (-.2,-.2) rectangle (3,3); 
	\draw [dashed,ultra thick] (.2,.2) rectangle (-3,-3); 
	\draw [fill, blue] (11.8,-.2) rectangle (15,3); 
	\draw [fill, blue] (12.2,.2) rectangle (9,-3); 
	\draw [dashed,ultra thick] (11.8,-.2) rectangle (15,3); 
	\draw [dashed,ultra thick] (12.2,.2) rectangle (9,-3); 
	\draw [fill,blue] (12,0) circle [radius=1];
	\draw [ultra thick, dashed] (12,0) circle [radius=1];	
	\node at (6,0) {$\Rightarrow$};
	\draw [fill, red] (.5,-.5) circle [radius=0.2];
	\draw [fill, red] (12.5,-.5) circle [radius=0.2];
	\node [below right] at (.5,-.5) {$p$};
	\node [below right] at (12.5,-.5) {$p$};
	\end{tikzpicture}
	\caption{Applying the wedge-of-the-edge theorem for hypercubes where the intersection is squeezed sufficiently near $p$ gives the proof of Corollary \ref{hartogs} since the function is forced to analytically continue to $p$.}
\end{figure}
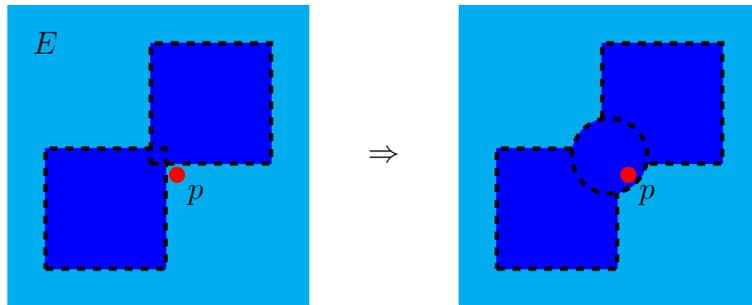

The origin of the wedge-of-the-edge theorem lies in the theory of Pick functions in several variables and their boundary values, which has received some amount of interest since the connection to multivariable operator monotonicity was established by Agler, McCarthy and Young\cite{amyloew}. Using strong aspects of their structure, a very detailed specialized version of the wedge-of-the-edge theorem was obtained by the author in \cite{pascoeBLMS} which can be used to relax the main result in \cite{amyloew} which was done in \cite{pascoeMollifier}. Some of the methods from \cite{pascoeBLMS} apply for our current endeavor, but, in general, the analysis is significantly more opaque here.

\section{The wedge-of-the-edge theorem in general}

	We define a \dfn{real wedge} $W \subseteq \mathbb{R}^d$
which is
\begin{enumerate}
\item contained within the positive orthant, that is $W \subset (\mathbb{R}^{\geq 0})^d,$
\item is a Borel set,
\item has positive measure,
\item is starlike with respect to the origin, that is, if $x \in W$ then $tx \in W$ whenever $0< t < 1.$
\end{enumerate}	

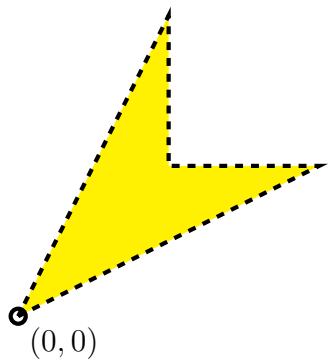
\begin{figure}[H]
\begin{tikzpicture}
\fill [yellow] (0,0) -- (4,2) -- (2,2) -- (2,4) -- (0,0);
\draw [dashed, black, ultra thick] (0,0) -- (4,2) -- (2,2) -- (2,4) -- (0,0);
\node[below right] at (0,0) {$(0,0)$} ;
\draw [black, ultra thick] (0,0) circle [radius=0.1];
\end{tikzpicture}
\caption{A real wedge.}
\end{figure}

The full version of the wedge-of-the-edge theorem is as follows.
	\begin{theorem}[The wedge-of-the-edge theorem]
	\label{wedgefull}
	Let $W$ be a real wedge.
	There is an open set $D$ containing $0$ such that for any real neighborhood $B$ of $0,$
	every continuous
	function $f:\Pi^d \cup (-1,\varepsilon)^d \cup W \cup B \cup
	-W  \cup -\Pi^d \rightarrow \mathbb{C}$
	which is  analytic on $\Pi^d\cup -\Pi^d,$
	analytically continues to $D.$
	\end{theorem}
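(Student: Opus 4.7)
The plan is to leverage the classical edge-of-the-wedge theorem to produce an initial analytic continuation, and then to use the starlike structure of $W$ together with an envelope-of-holomorphy argument to upgrade it to an open neighborhood of $0$ whose size depends only on $W$ and not on the overlap $B$.

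First, I would apply the classical edge-of-the-wedge theorem to extend $f$ analytically to an open set $N\subseteq\mathbb{C}^d$ containing $\Pi^d\cup\intr(W\cup B\cup -W)\cup -\Pi^d$. A quantitative reading, along the lines of \cite{rudeow}, tells us that if $y$ is an interior point of the real edge with in-radius $r$ relative to the edge, then $N$ contains a complex polydisc about $y$ of radius comparable to $r$.

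Next, I would propagate this extension toward the origin using the wedge structure. For any interior point $x\in W$ with real in-radius $\rho(x)>0$, starlikeness gives $\lambda x\in W$ with in-radius at least $\lambda\rho(x)$ for every $\lambda\in(0,1]$, so $N$ contains the apex cone
\[
\mathcal{C}_x^+:=\{\lambda x+w:\lambda\in(0,1],\,w\in\mathbb{C}^d,\,|w|<c_d\lambda\rho(x)\},
\]
and similarly $\mathcal{C}_x^-:=-\mathcal{C}_x^+\subseteq N$ arising from $-W$. Thus $f$ is analytic on a connected open set containing $\Pi^d$, $-\Pi^d$, the cones $\mathcal{C}_x^{\pm}$ for each interior $x\in W$, and a small complex neighborhood of $B$; the geometry of this set near $0$ is controlled entirely by $W$ except for the $B$-neighborhood, which is the piece whose $B$-dependence we eventually wish to eliminate.

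The crucial step is to upgrade this collection of cones, together with the tubes $\Pi^d$ and $-\Pi^d$, to a genuine open set about $0$ of fixed size. My approach is a Bochner-tube-type envelope-of-holomorphy argument: letting $x$ vary over a positive-measure family of interior points of $W$, and taking the convex hull in the imaginary direction at real basepoints near $0$, the resulting projections sweep out a neighborhood of $0\in i\mathbb{R}^d$ whose size depends only on the apertures of the cones $\mathcal{C}_x^{\pm}$ and hence only on $W$. The positive-orthant orientation of $W$ is essential here: it guarantees that the cones from $W$ and $-W$, combined with the imaginary directions of $\Pi^d$ and $-\Pi^d$, span a full neighborhood of $0$ in $i\mathbb{R}^d$ rather than a degenerate subspace. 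The positive measure of $W$ is what ensures enough independent directions $x$ are available to carry out this convex combination.

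The main obstacle I expect lies in making this envelope-of-holomorphy step rigorous. The domain $N$ is neither a tube nor a simple union of tubes, so Bochner's theorem does not apply directly; a careful ad hoc argument, perhaps via Cauchy-type integral representations along contours that reach into the cones $\mathcal{C}_x^{\pm}$, or by a refinement of the techniques of \cite{pascoeBLMS}, will be required to produce the uniform neighborhood of $0$ whose size is genuinely independent of $B$.
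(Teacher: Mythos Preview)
Your approach is genuinely different from the paper's, and the obstacle you flag at the end is not a technicality but the heart of the matter, and as sketched it does not go through. The specific mechanism you propose --- ``convex hull in the imaginary direction at real basepoints near $0$'' --- fails at the apex: over the real basepoint $0$ itself, the cones $\mathcal{C}_x^{\pm}$ contribute nothing (their radius is $c_d\lambda\rho(x)\to 0$), and $\Pi^d,\ -\Pi^d$ contribute only the open positive and negative orthants in $i\mathbb{R}^d$, neither of which contains $0$. The only piece of your domain that actually contains $0$ is the $B$-dependent neighborhood $N(B)$, so a fibrewise convexification cannot manufacture a $B$-independent ball there. Bochner's theorem and its local variants need translation-invariance in the real direction, which your domain lacks near $0$; absent that, there is no off-the-shelf envelope argument, and you have not supplied an ad hoc one. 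Also, your explanation of the role of the positive-orthant condition is off: the cones $\mathcal{C}_x^{\pm}$ already have full complex balls as cross-sections, so the orientation of $W$ is not what makes the imaginary directions span.

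The paper's proof avoids envelope-of-holomorphy machinery entirely and is worth contrasting. Edge-of-the-wedge is invoked only once, qualitatively, to get that $f$ is analytic at $0$ and hence has a power series $f(z)=\sum_d h_d(z)$ in homogeneous parts. The starlikeness and positive-orthant hypotheses on $W$ are then used in one variable: for $x$ in the interior of $W$, the slice $w\mapsto f(wx)$ is analytic on a disc of radius strictly greater than $1$ (because $wx\in\Pi^d\cup\mathbb{R}^d\cup -\Pi^d$ for all such $w$), so $\sum_d |h_d(x)|<\infty$ for a.e.\ $x\in W$. A measure-exhaustion argument then produces a Borel set $S\subset W$ with $m(S)\geq \tfrac{1}{2}m(W)$ on which $\sup_d|h_d|\leq N_0$ uniformly. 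The decisive step is a polynomial estimate: any degree-$d$ polynomial bounded by $1$ on a Borel subset of $[0,1]^n$ of measure at least $p$ has $\ell^1$ coefficient norm at most $KC^d$, with $K,C$ depending only on $n$ and $p$; this is proved by induction on $n$ via Lagrange interpolation on $d+1$ well-separated one-dimensional slices of $S$. The resulting bound $|h_d(z)|\leq K C^d\|z\|^d$ gives a radius of convergence $1/C$ that depends only on $m(W)$ and $n$, not on $B$. So the $B$-independence is obtained from real polynomial interpolation on a set of fixed measure, not from any complex-geometric continuation argument.
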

	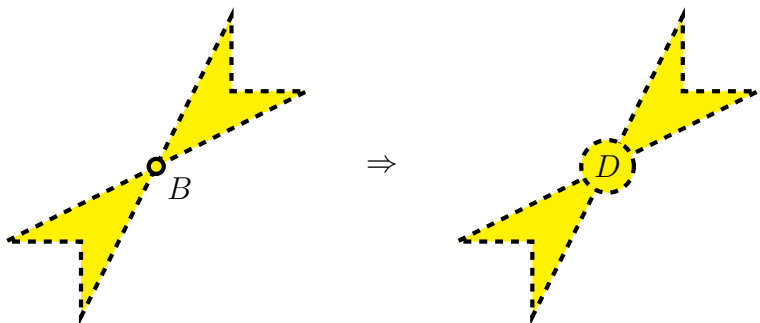
\begin{figure}[H]
\begin{tikzpicture}[xscale=.5,yscale=.5]	
\fill [yellow] (0,0) -- (4,2) -- (2,2) -- (2,4) -- (0,0);
\draw [dashed, black, ultra thick] (0,0) -- (4,2) -- (2,2) -- (2,4) -- (0,0);
\fill [yellow] (0,0) -- (-4,-2) -- (-2,-2) -- (-2,-4) -- (0,0);
\draw [dashed, black, ultra thick] (0,0) -- (-4,-2) -- (-2,-2) -- (-2,-4) -- (0,0);
\node[below right] at (0,0) {$B$} ;
\fill [yellow] (0,0) circle [radius=0.2];
\draw [black, ultra thick] (0,0) circle [radius=0.2];

\node at (6,0) {$\Rightarrow$} ;
\fill [yellow] (12,0) -- (16,2) -- (14,2) -- (14,4) -- (12,0);
\draw [dashed, black, ultra thick] (12,0) -- (16,2) -- (14,2) -- (14,4) -- (12,0);
\fill [yellow] (12,0) -- (8,-2) -- (10,-2) -- (10,-4) -- (12,0);
\draw [dashed, black, ultra thick] (12,0) -- (8,-2) -- (10,-2) -- (10,-4) -- (12,0);
\fill [yellow] (12,0) circle [radius=.7];
\draw [black, ultra thick, dashed] (12,0) circle [radius=.7];
\node at (12,0) {$D$} ;
\end{tikzpicture}
\caption{The general wedge-of-the-edge theorem.}
\end{figure}
	
	The remainder of the section will consist of a proof the wedge-of-the-edge theorem, along with some remarks on the limitations of our techniques. The trick is to bound the polynomials occurring in the power series for $f$ at $0$
	by interpolating their values after some superficially elaborate reduction.
	
	\begin{proof}
Without loss of generality our wedge will be contained inside $[0,1]^n.$	
	
	We first note that $f$ must be analytic at $0$ by the classical edge-of-the-wedge theorem.
	So, $f$ has a power series at $0.$
	Write
	\begin{equation}\label{seriesone}
	f(z)  =\sum^{\infty}_{d=0} h_d(z)
\end{equation}	 
near $0$
where $h_d$ are the homogeneous terms of the power series for $f.$
	Our goal will be to show that the series for $f$ is absolutely convergent of some neighborhood of $0$ which has size
	which depends on $W$ and not on the size of $B.$
That is it will be our goal to show that $|h_d(z)| \leq K \cdot  C^d 	\|z\|.$

Under our assumptions it is clear to see that the series in Equation \eqref{seriesone}
converges almost everywhere on $W.$
Namely, the series converges absolutely on each $(1-\varepsilon)W.$
For $x \in (1-\varepsilon)W,$ if we consider
the function
$$g(w) = f(wx) = \sum^{\infty}_{d=0} h_d(wx) = \sum^{\infty}_{d=0} h_d(x)w^d,$$
we see that the rightmost series for $g(w)$ converges on
a $\frac{1}{1-\varepsilon}\mathbb{D}$ and therefore absolutely when $w =1.$ Here, we are using the assumption that $W$ is starlike to ensure the definition of $g$ on the whole of $(-\frac{1}{1-\varepsilon},\frac{1}{1-\varepsilon})$ and since $W$ is contained in the positive orthant, $wx \in \Pi^n \cup \mathbb{R}^n \cup-\Pi^n.$

That is, it is now sufficient to prove the following lemma.
\begin{lemma} \label{lemmaone}
Fix a real wedge $W.$
There is a constant $C >0$ such that,
given $(h_d)^{\infty}_{d=0}$ a sequence of homogeneous polynomials in $n$ variables each of degree $d,$
and
$\sum^{\infty}_{d=0} h_d(x)$ converges almost everywhere on
$W$
there is a $K >0$ such that
$|h_d(z)| \leq K C^d \|z\|^d.$
\end{lemma}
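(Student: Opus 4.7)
The plan is to establish uniform-in-$d$ bounds on the values of each $h_d$ in a neighborhood of the origin; by the homogeneity of $h_d$ this is equivalent to a uniform bound $|h_d(\hat z)| \leq K C^d$ on the unit sphere. The strategy has two stages: first, exploit the almost-everywhere convergence of the series on $W$ to extract a subset of positive measure on which $|h_d|$ is uniformly bounded in $d$; second, inflate this bound to a neighborhood of the origin using a multivariate Remez-type polynomial inequality.

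For the first stage, note that almost-everywhere convergence of $\sum h_d(x)$ on $W$ implies $h_d(x)\to 0$ almost everywhere on $W$. For each $N$, set
$$W_N = \{x \in W : |h_d(x)| \leq 1 \text{ for all } d \geq N\}.$$
Each $W_N$ is measurable (the intersection of the Borel set $W$ with the countable intersection of closed sets $\{|h_d|\leq 1\}$), the sequence is nested, and $\bigcup_N W_N$ is all of $W$ up to a null set. Since $W$ has positive measure, some $W':=W_{d_0}$ has positive measure, and by construction $|h_d(x)|\leq 1$ for every $x\in W'$ and every $d\geq d_0$.

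For the second stage, I invoke the Brudnyi--Ganzburg inequality, a multivariate Remez-type bound: for any convex body $K\subseteq \mathbb{R}^n$, any measurable $E\subseteq K$, and any polynomial $p$ of total degree $d$,
$$\sup_{K}|p|\;\leq\; T_d\!\left(\frac{1+(1-|E|/|K|)^{1/n}}{1-(1-|E|/|K|)^{1/n}}\right)\sup_{E}|p|,$$
where $T_d$ is the $d$th Chebyshev polynomial. Applied with $K=[-1,1]^n$, $E=W'\subseteq [0,1]^n\subseteq K$, and $p=h_d$, and using the elementary estimate $T_d(x)\leq (2x)^d$ for $x\geq 1$, this yields $\sup_{[-1,1]^n}|h_d|\leq C^d$ for all $d\geq d_0$, with a constant $C>0$ depending only on $|W'|$ and hence only on $W$. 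By homogeneity this gives $|h_d(z)|\leq C^d\|z\|_\infty^d$ for all $z\in\mathbb{R}^n$; replacing $\|\cdot\|_\infty$ by any equivalent norm only modifies $C$ by a universal factor. The finitely many indices $d<d_0$ each contribute a fixed polynomial with some bound $|h_d(z)|\leq M_d\|z\|^d$, which are absorbed into the final constant $K$.

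The main obstacle is the polynomial inequality used at the second stage: without such a Remez-type bound, the a.e.\ convergence of $\sum h_d$ on $W$ yields only the pointwise asymptotic control $\limsup_d|h_d(x)|^{1/d}\leq 1$ for almost every $x\in W$, which is far too weak to produce a uniform domain of analyticity. The crucial feature of Brudnyi--Ganzburg is that the resulting constant $C$ depends only on $|W'|$, and in particular not on the sequence $(h_d)$ --- precisely what is needed so that the open set $D$ produced in Theorem \ref{wedgefull} depends only on the geometry of $W$, and not on the function $f$.
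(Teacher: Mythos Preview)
Your argument is essentially correct and reaches the same conclusion as the paper, but by a different route and with two small points that need tightening.

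\textbf{Comparison with the paper.} For the first stage, the paper upgrades a.e.\ convergence on $W$ to a.e.\ \emph{absolute} convergence on $(1-\varepsilon)W$ (using that $W$ is starlike: if $\sum h_d(tx_0)$ converges for all $0<t<1$ then the one-variable series $\sum h_d(x_0)t^d$ has radius $\geq 1$), and then sets $S_N=\{x\in W:\sum_d |h_d(x)|\leq N\}$. You instead use only $h_d(x)\to 0$ a.e., which is simpler and sidesteps the starlike step entirely. For the second stage, the paper proves its own Remez-type estimate from scratch (Lemma~\ref{lemmatwo} and the $\ell^1$ lemma following it) by induction on $n$ via Lagrange interpolation along slices; you instead cite Brudnyi--Ganzburg. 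Your route is shorter and invokes a sharper inequality; the paper's is self-contained. Both yield the same exponential-in-$d$ control with a constant depending only on the measure of the good set.

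\textbf{Two gaps.} First, you write that $C$ ``depends only on $|W'|$ and hence only on $W$,'' but $|W'|$ depends on the sequence $(h_d)$: you only chose $W'=W_{d_0}$ of \emph{some} positive measure. Since the $W_N$ increase and $|W_N|\to|W|$, choose $d_0$ so that $|W_{d_0}|>\tfrac12|W|$; then the Brudnyi--Ganzburg constant involves only the fixed ratio $\tfrac12|W|/2^n$, and $C$ genuinely depends only on $W$. This is exactly the move the paper makes with its $S_{N_0}$. Second, Brudnyi--Ganzburg gives you $\sup_{[-1,1]^n}|h_d|\leq C^d$ on the \emph{real} cube, and homogeneity then yields the bound only for $z\in\mathbb{R}^n$, as you note. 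The lemma, and its application in Theorem~\ref{wedgefull}, require the bound for complex $z$. The paper handles this by controlling the $\ell^1$-norm of the coefficients, which dominates $|h_d(z)|$ for all $z\in\mathbb{C}^n$. From your real-cube bound one can pass to the complex polydisk by a one-variable Bernstein--Walsh (Chebyshev) step applied in each coordinate successively, at the cost of replacing $C$ by a fixed multiple; this step should be stated.
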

	First note
	$\sum^{\infty}_{n=0} |h_n(x)|$
	converges for $x\in (1-\varepsilon)W,$
	since the function $g(w) = f(wx)$ is defined and analytic for
	$w \in \frac{1}{1-\varepsilon}\mathbb{D}.$

	Define
	$S_N = \{x \in W | \sum^{\infty}_{d=0} |h_d(x)|\leq  N\}.$
	We note that Alaoglu's theorem implies that $S_N$ is relatively closed in $W.$ That is a sequence of elements $s_i$ of
	$\ell^1(\mathbb{N})$ which converge pointwise to a limit $S$
	in $\ell^1(\mathbb{N})$ must satisfy
	$\|s\|_{\ell^1} \leq \liminf \|s_i\|_{\ell^1}.$ Here, given a sequence $x_i \in S_N$ and a limit point $x \in W$ we are taking
	$s_i = (h_d(x_i))_{d\in \mathbb{N}}$ and 
	$s =  (h_d(x))_{d\in \mathbb{N}}.$
	Namely, each $S_N$ is Borel.
	
	Moreover, the measure of  $S_N$ converges to the measure
	of $W$ as $N$ goes to infinity.
	So, there is some $N_0$ such that $S_{N_0}$ has measure greater than half the measure of $W.$
	
	Now, note that each $|h_d(x)| \leq N_0$ on $S_{N_0}.$
	So, it is now sufficient to prove the following lemma to establish Lemma \ref{lemmaone}.
	\begin{lemma}\label{lemmatwo}
Fix $n.$ Fix $p>0.$
There is a constant $C >0$ and a constant $K>0$ such that,
for each $S$ a Borel set inside $[0,1]^n$ with positive measure greater than to $p$,
given $h$ a homogeneous polynomial of degree $d$ in $n$ variables
such that $|h(x)| \leq 1$ on $S$
there is a $K >0$ such that
$|h(z)| \leq K C^d\|z\|^d.$
\end{lemma}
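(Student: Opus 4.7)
The plan is to combine a multivariate Remez-type inequality with a polarization argument to transfer the bound from $S$ all the way to $\mathbb{C}^n$.

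\textbf{Step 1 (Remez transfer).} First I would apply a multivariate Remez inequality (for instance, the Brudnyi--Ganzburg inequality) to lift the bound $|h| \leq 1$ on $S$ to a bound $|h| \leq C_1^d$ on $[0,1]^n$, where $C_1$ depends only on $n$ and $p$. The mechanism is that for any polynomial $q$ of degree $d$ in $n$ real variables bounded by $M$ on a measurable subset $E$ of a convex body $K$, one has $\|q\|_{L^\infty(K)} \leq T_d\!\left(\tfrac{1+(1-|E|/|K|)^{1/n}}{1-(1-|E|/|K|)^{1/n}}\right) M$, and $T_d(x) \leq (2x)^d$ for $x > 1$, which provides the desired exponential-in-$d$ bound with constant depending only on $|E|/|K|$ and $n$.

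\textbf{Step 2 (From the cube to $\mathbb{C}^n$).} Next, iterate a one-variable Chebyshev extension: a univariate polynomial of degree $\leq d$ bounded by $M$ on $[0,1]$ is bounded by $M \cdot T_d(3) \leq 6^d M$ on $[-1,1]$, since $t \mapsto 2t - 1$ sends $[-1,1]$ to $[-3,1]$. Applying this successively in each coordinate, while fixing the others, gives $\sup_{[-1,1]^n} |h| \leq C_2^d$ with $C_2 = 6^n C_1$, and homogeneity yields $|h(x)| \leq C_2^d \|x\|^d$ for all $x \in \mathbb{R}^n$ (absorbing $\|\cdot\|_\infty \leq \|\cdot\|$ into the constant). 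Finally, I would write $h(z) = T(z, \ldots, z)$ for the unique symmetric $\mathbb{C}$-multilinear form $T$; polarization together with the real estimate gives $|T(x_1, \ldots, x_d)| \leq (d^d/d!) C_2^d \|x_1\| \cdots \|x_d\|$ for real inputs, and expanding $T(z, \ldots, z)$ by complex multilinearity into $2^d$ terms with real arguments produces $|h(z)| \leq (2 e C_2)^d \|z\|^d$ after using $d^d/d! \leq e^d$. This gives the lemma with $C = 2 e C_2$.

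The main obstacle is Step 1: securing an explicit exponential-in-$d$ Remez-type inequality in the multivariate setting and showing the constant depends only on the relative measure $p$ and the dimension $n$. Step 2 is a standard manipulation, and the polarization portion is essentially the classical argument showing that symmetric multilinear forms and their associated homogeneous polynomials have comparable sup norms on the unit ball.
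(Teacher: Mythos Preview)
Your argument is correct and takes a genuinely different route from the paper. The paper does not invoke Brudnyi--Ganzburg or polarization at all; instead it first reduces to the coefficient estimate $\|h\|_{\ell^1}\le KC^d$ (which immediately gives the complex bound $|h(z)|\le\|h\|_{\ell^1}\|z\|_\infty^d$ with no need to split real and imaginary parts), and then proves that estimate by an elementary induction on the dimension $n$. At the inductive step the author finds $d+1$ hyperplane slices of $S$, each of $(n-1)$-dimensional measure at least $p$ and with first coordinates separated by roughly $p/d$, applies the inductive hypothesis on each slice, and reconstructs $h$ by Lagrange interpolation in the first variable; the product $\prod_{j\ne i}(x-x_j)/(x_i-x_j)$ is then estimated crudely and Stirling is applied. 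In effect the paper \emph{reproves} a Remez-type inequality from scratch, yielding a fully self-contained argument with explicit (though not sharp) constants. Your approach is shorter and more conceptual once Brudnyi--Ganzburg is granted, and your Step~2 neatly separates the ``real cube $\to$ real space'' and ``real $\to$ complex'' passages; the paper's $\ell^1$-coefficient device handles both of those at once but at the cost of the hand-built interpolation estimate.
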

Now we need to introduce the $\ell^1$ norm on polynomials, which is given by the sum of the moduli of the coefficients.
That is, given a polynomial
$$h(z)  =\sum a_Iz^I,$$
We define the $\|h\|_{\ell^1}$ via the formula
$$\|h\|_{\ell^1} = \sum |a_i|.$$
Notably, if $h$ is a polynomial in some set of variables
$x_1, \ldots, x_k$ and $g$ is a polynomial in $y_1, \ldots, y_l$
we see that 
$\|hg\|_{\ell^1} = \|h\|_{\ell^1} \|g\|_{\ell^1}.$
Moreover, we note that, for homogeneous polynomials
$$|h(z)| \leq  \|h\|_{\ell^1} \| z\|^d.$$
So, now it is sufficient to show the following to establish Lemma \ref{lemmatwo}.
	\begin{lemma}
Fix $n.$ Fix $p >0.$
There is a constant $C >0$ and a constant $K>0$ such that,
for each $S$ a Borel set inside $[0,1]^n$ with positive measure greater than $p,$
given $h$ a polynomial of degree $d$ in $n$ variables
such that $|h(x)| \leq 1$ on $S$
there is a $K >0$ such that
$\|h\|_{\ell^1} \leq K C^d.$
\end{lemma}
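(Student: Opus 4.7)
The plan is to combine a multivariate Remez-type inequality on $[0,1]^n$ with a Chebyshev-polynomial complexification and Cauchy estimates on the polytorus. I would pass successively through: a uniform sup-norm bound on the real cube, a sup-norm bound on the complex polydisc, bounds on individual coefficients, and finally a sum over monomials.

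First, I would establish the multivariate Remez estimate: for some $A=A(n,p)>0$, any degree-$d$ polynomial $h$ with $|h|\le 1$ on a Borel set $S\subseteq[0,1]^n$ of measure at least $p$ satisfies $\sup_{[0,1]^n}|h|\le A^d$. The one-variable case is the classical Remez inequality, which gives a bound of order $(4/p)^d$ via Chebyshev extremality. For the multivariate statement I would induct on $n$ using Fubini: since $|S|\ge p$, Markov's inequality shows that on a set $T\subseteq[0,1]^{n-1}$ of $(n-1)$-dimensional measure at least $p/2$ the slice $\{x_1:(x_1,x_2,\dots,x_n)\in S\}$ has one-dimensional measure at least $p/2$. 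Applying 1D Remez in the $x_1$ direction upgrades $|h|\le 1$ on $S$ to $|h|\le A_1^d$ on $[0,1]\times T$, and applying the inductive hypothesis to $h(x_1,\cdot)/A_1^d$ for each fixed $x_1$ closes the recursion with a constant depending only on $n$ and $p$.

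Second, I would complexify. The affine change of variables $y=2x-1$ converts the bound on $[0,1]^n$ into one on $[-1,1]^n$ with a comparable constant. A Bernstein--Walsh/Chebyshev estimate then yields $\sup_{\mathbb{D}^n}|h|\le B^d$ for some $B=B(n,p)$, obtained either from the pluri-Green function of $[-1,1]^n$ or by iterating the single-variable bound $|p(z)|\le \|p\|_{[-1,1]}T_d(|z|)$ one coordinate at a time. Cauchy's formula on the polytorus then gives $|a_I|\le B^d$ for every coefficient, and since there are at most $(d+1)^n$ monomials of total degree at most $d$,
\[
\|h\|_{\ell^1}\ \le\ (d+1)^n B^d\ \le\ KC^d
\]
for any $C>B$ after absorbing the polynomial factor into the exponential.

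The main obstacle is the multivariate Remez step. The one-variable inequality is a classical Chebyshev computation, but the Fubini iteration must be organized so that the measure-lower-bound hypothesis persists under each reduction; the trick is to use the bound already obtained in the uniformized variable to normalize before recursing, so that the inductive hypothesis is applied to a polynomial bounded by $1$ on a set of the requisite lower-dimensional measure. Once that recursion is in place, the complexification and coefficient harvesting are mechanical, and the constants compose transparently to give the desired $C$ and $K$.
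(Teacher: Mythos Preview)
Your proposal is correct and takes a genuinely different route from the paper. The paper argues by induction on $n$ directly on the $\ell^1$ norm: at the inductive step it selects $d+1$ slices $\{x_i\}\times[0,1]^{n-1}$ of $S$ whose $(n-1)$-dimensional measure is still at least $p$ and whose base points $x_i$ are uniformly separated (by a quantity depending only on $p$ and $d$), applies the inductive bound to the restricted polynomials $h_i=h(x_i,\cdot)$, and then reconstructs $h$ via Lagrange interpolation in the first variable, bounding $\|h\|_{\ell^1}$ from the $\|h_i\|_{\ell^1}$ together with the explicit $\ell^1$ norms of the Lagrange basis polynomials, using Stirling to control the resulting $d^d/d!$.

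Your route instead first passes through the sup norm on $[0,1]^n$ via a multivariate Remez inequality (proved by the Fubini-and-slice induction you describe), then complexifies to $\overline{\mathbb D}^n$ by iterated one-variable Bernstein--Chebyshev, and finally harvests coefficient bounds by Cauchy on the polytorus. This is more modular---it quotes known extremal inequalities rather than rebuilding them---and cleanly separates the measure-theoretic step from the coefficient extraction; it also makes the dependence of the constants on $n$ and $p$ easier to read off. The paper's argument is more self-contained, needing no Remez or Bernstein--Walsh as black boxes, but pays for this with the somewhat delicate interpolation estimate. Both proofs ultimately rest on the same Fubini/slicing idea to reduce dimension; they diverge in which quantity is carried through the induction (your sup norm versus the paper's $\ell^1$ norm) and in how the one-variable step is handled (classical Remez versus Lagrange interpolation at well-separated nodes).
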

The proof will go by induction on the number of variables.

In no variables, that is, $n=0,$ we see that $h(x) = k.$ Clearly, the maximum modulus that the constant value $k$ may have if our monomial $h$ is to be bounded on $S$ is $1.$ So we get that $\|h\|_{\ell^1} \leq 1.$

\begin{figure}[H]
\begin{tikzpicture}

\draw [dashed,ultra thick] (0,0) rectangle (4,4); %

\fill [yellow] (0.5,.2) -- (.5,3) -- (1.8,3) -- (2.6,1) -- (3.7,3.5)
-- (3.9,.5) -- (3.8,.2) -- (0.5,.2);

\draw [dashed] (0.5,.2) -- (.5,3) -- (1.8,3) -- (2.6,1) -- (3.7,3.5)
-- (3.9,.5) -- (3.8,.2) -- (0.5,.2);

\draw[ultra thick] (.6,.2) -- (.6,3);
\node[right] at (.6,2) {$P_0$};

\draw[ultra thick] (1.6,.2) -- (1.6,3);
\node[right] at (1.6,2) {$P_1$};

\draw[ultra thick] (3.6,.2) -- (3.6,3.2);
\node[left] at (3.6,2) {$P_2$};

\node[below right] at (2,1) {$S$};

\draw [fill, black] (.6,0) circle [radius=0.1];
\draw [fill, black] (1.6,0) circle [radius=0.1];
\draw [fill, black] (3.6,0) circle [radius=0.1];
\node[below right] at (.6,0) {$x_0$};
\node[below right] at (1.6,0) {$x_0$};
\node[below right] at (3.6,0) {$x_0$};
\end{tikzpicture}
\caption{The idea of the proof is to approximate on one lower dimensional pieces which are relatively large and relatively far apart, where relatively large and far apart are constrained by the measure of $S.$}
\end{figure}
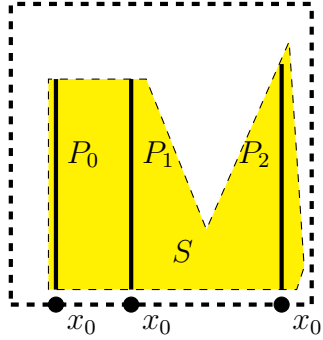

Now fix a general number of variables $n.$
One may pick points $x_0, \ldots, x_d$ such that if we define
$P_i = \{x_i\}\times [0,1]^{n-1} \cap S$
each $P_i$ has $n-1$ dimensional measure greater than $p$ and each $x_i$ is at least $p/n$ apart. The worst case is when all the measure is on one side of the box. We note that the Borel nature of $S$ ensures each slice will be Borel and hence measurable.

Let $h_i$ be a polynomial in $n-1$ variables which gives $h$ on each $P_i.$
Now, by induction we see that there is $\hat{K}$ and $\hat{C}$
such that 
$\| h_i\|_{\ell^1} \leq \hat{K} \hat{C}^{d}.$

Now
apply the Lagrange interpolation theorem, noting that interpolation recovers the polynomial exactly since we have enough nodes, to get that
$$h = \sum^d_{i=0}
		h_i\prod_{i\neq j} \frac{x-x_j}{x_i-x_j}.$$
So we see that
\begin{align*}
\|h\|_{\ell^1}& = \left\|\sum^d_{i=0}
		h_i\prod_{i\neq j} \frac{x-x_j}{x_i-x_j}\right\|_{\ell^1}, \\
& =\sum^d_{i=0}
		\|h_i\|_{\ell^1}\left\|\prod_{i\neq j} \frac{x-x_j}{p}\right\|_{\ell^1}, \\
& \leq \sum^d_{i=0}
		\|h_i\|_{\ell^1}\prod_{i\neq j} \frac{2}{(i-j)p/d}, \\
& \leq \sum^d_{i=0}
		\|h_i\|_{\ell^1}\frac{d^d}{d!p^d }\binom{d}{i}2^d, \\
& \leq \sum^d_{i=0}
		\|h_i\|_{\ell^1}\frac{d^{d}}{d!p^d d}\binom{d}{i}2^d. \\
& \leq \hat{K} \hat{C}^{d}\frac{d^{d}}{d!p^d}4^d. \\
& \leq K C^d. \\
\end{align*}
Above the in the last line we needed to apply Stirling's estimate
which says that $d^d / d!$ grows like $e^d.$ This completes the proof.

\end{proof}
\subsection{Caveat emptor}
There are several reasons the reader should beware.

Firstly, our methods generate some kind of abstract estimates on $D,$ but we do not really understand the polynomial estimates obtained. Secondly, even if we understood an optimal phrasing of something like Lemma \ref{lemmaone}, there are severe limits to our technique. We, apparently did not fully use the continuation to the full poly upper half plane-- in fact, not even the whole polydisk. (A modest abstract improvement can be obtained therefore by conformally mapping into the polydisk first.)  However, more importantly, there are limits to the polynomial interpolation method itself which do not seem to be optimal. For example taking a sum over homogenized Chebychev polynomials (which are all bounded by one on a certain wedge) would give some slight transformation of their generating function, namely $\frac{1-x}{1-2x+t^2}$, see \cite[page 69]{GENCHEB}, which satisfies the hypotheses of Lemma \ref{lemmaone}, but also apparently has singularities in the poly upper half plane.

That is, the polynomial value interpolation appears to be a rather naive method to approach the optimal wedge-of-the-edge theorem.


\section{Some concluding remarks and conjecture}
\subsection{On cones and some uniformity} \label{uniformity}
Some formulations of the edge-of-the-wedge theorem  state the theorem with respect to cones\cite{EPSTEIN, rudeow}. The goal now will be to discuss the context of cones and show that in that context we obtain some uniform amount of analytic continuation.

Let $C$ be an open cone in $\mathbb{R}^n$ with a distinguished element $\vec{1} \in C.$
We define a $\|x\|_C$ for $x\in \mathbb{R}^n$
by
$$\|x\|_C =  \max(\inf \{\lambda \in \mathbb{R}^{\geq 0} | \lambda \vec 1 - x \in C\},
\inf \{\lambda \in \mathbb{R}^{\geq 0} | \lambda \vec 1 + x \in 
C\}).$$
For example if $C$ is the positive orthant and 
$\vec{1}=(1,\ldots, 1),$ we recover the $\ell^{\infty}$
norm.
Another example would be to view $\mathbb{R}^{n\times n}$
as $n$ by $n$ Hermitian matrices and $\vec{1}$ to be the identity matrix which recovers the maximum modulus eigenvalue norm on Hermitian matrices.
We extend the norm to $x + iy \in \mathbb{C}^n$
via the formula $\|x+iy\|_C = \max(\|x\|,\|y\|).$
We define $\Pi_C = \mathbb{R}^n + i C.$

\begin{corollary}
Define $W = \{x \in C |\vec{1} - x\in C\}.$
There is an open set $D,$ containing a ball of some radius in
the $\|\cdot \|_C$ norm independent of $W$ such that
for any neighborhood $B$ of $0$
any continuous function 
$f: \Pi_C \cup W \cup B \cup  -W \cup -\Pi_C \rightarrow \mathbb{C}$
which is analytic on $\Pi_C \cup  -\Pi_C$ analytically continues
to $D.$
\end{corollary}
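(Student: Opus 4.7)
The plan is to reduce the cone version to Theorem \ref{wedgefull} by a linear change of coordinates adapted to $C$. Since $C$ is an open convex cone with $\vec{1}$ in its interior, I can select $n$ linearly independent vectors $v_1,\ldots,v_n \in C$ whose convex hull (with the origin) sits inside $C$, and which are small enough that the simplex $\{\sum_i t_i v_i : 0 \le t_i \le 1\}$ lies in $W$. This is possible because $W$ is starlike with $\vec{1}$ in its interior, and the choice depends only on the data $(C,\vec{1})$. Let $\Phi: \mathbb{C}^n \to \mathbb{C}^n$ be the complex linear isomorphism $\Phi(z) = \sum_i z_i v_i$. For $x+iy \in \Pi^n$ each $y_i > 0$, so $\sum_i y_i v_i$ is a positive combination of elements of the convex cone $C$, hence in $C$; thus $\Phi(\Pi^n) \subseteq \Pi_C$ and similarly $\Phi(-\Pi^n) \subseteq -\Pi_C$.

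Now set $g = f \circ \Phi$. By the inclusions above and the choice of the $v_i$, $g$ is continuous on
$$\Pi^n \cup [0,1]^n \cup \Phi^{-1}(B) \cup (-[0,1]^n) \cup (-\Pi^n)$$
and analytic on $\Pi^n \cup -\Pi^n$. The hypercube $[0,1]^n$ is a real wedge in the sense of the paper, and $\Phi^{-1}(B)$ is a real neighborhood of $0$. Theorem \ref{wedgefull} therefore yields an open set $D_0 \ni 0$, depending only on the hypercube wedge (hence only on $C$ and $\vec{1}$), to which $g$ extends analytically. Pushing forward, $f$ extends to $D := \Phi(D_0)$, and this extension is uniform in $B$ because $D_0$ already is.

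It remains to produce the ball in $\|\cdot\|_C$. Since $D_0$ is open and contains $0$, it contains a Euclidean ball of some radius $r_0 = r_0(C,\vec{1})$. Its image under $\Phi$ contains a Euclidean ball of radius $r_0/\|\Phi^{-1}\|_{\mathrm{op}}$ around $0$, which by the equivalence of norms on the finite-dimensional space $\mathbb{C}^n$ contains a $\|\cdot\|_C$-ball whose radius is determined purely by $(C,\vec{1})$. Since $W$ is itself determined by $(C,\vec{1})$, this radius is independent of $B$ (and of $f$), which is the desired conclusion.

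The main obstacle, such as it is, lies in the first step: arranging $v_1,\ldots,v_n$ simultaneously linearly independent, in the open cone, and small enough to send $[0,1]^n$ into $W$. This is purely a continuity observation, given that $\vec{1}$ has a Euclidean neighborhood inside $C$ and $W$ contains the segment $\{t\vec{1} : 0 \le t \le 1\}$. Once this setup is accomplished, everything else is formal; no new estimates beyond those already in the proof of Theorem \ref{wedgefull} are required.
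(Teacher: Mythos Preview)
Your argument is internally sound but it proves a weaker statement than the one being asserted. You read ``independent of $W$'' as meaning ``independent of $B$'' on the grounds that $W$ is determined by $(C,\vec{1})$; but $(C,\vec{1})$ are not fixed in the Corollary. The intended content (and the reason the paper calls the uniformity ``the novel part'') is that there is a single universal radius $r>0$ such that for \emph{every} open cone $C$ in \emph{every} dimension $n$ and every choice of $\vec{1}\in C$, the set $D$ contains the $\|\cdot\|_C$-ball of radius $r$. Your linear change of variables $\Phi$ depends on $C$ and on $n$: the radius $r_0$ you extract from Theorem~\ref{wedgefull} with the wedge $[0,1]^n$ depends on $n$ (the constants in the Lagrange-interpolation estimate degrade with the number of variables), and the factors $\|\Phi^{-1}\|_{\mathrm{op}}$ and the norm-equivalence constant depend on the geometry of $C$. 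So what you obtain is only a radius depending on $(C,\vec{1},n)$, which is already immediate from Theorem~\ref{wedgefull} and not the point of the Corollary.

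The paper's device is different and is exactly what supplies the missing uniformity: given any $z\in\mathbb{C}^n$ with $\|z\|_C<1/8$, one writes $z=x^+-x^-+i y^+-i y^-$ with $x^\pm,y^\pm\in\overline{C}$ each of $\|\cdot\|_C$-norm at most $1/4$ (take $x^{\pm}=\|z\|_C\vec{1}\pm\operatorname{Re}z$, etc.), and then studies the four-variable function $g(a,b,c,d)=f\big((a\,x^++b\,x^-+c\,y^-+d\,y^+)/2\big)$. Regardless of $n$ or $C$, this $g$ satisfies the hypotheses of the wedge-of-the-edge theorem in $\mathbb{C}^4$ with the fixed wedge $(0,1)^4$, so it continues to a fixed open set $D_0\subset\mathbb{C}^4$. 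Reading off the value at an appropriate point of $D_0$ gives analytic continuation of $f$ along the ray through $z$ for a length that depends only on $D_0$, hence is universal. The key idea you are missing is this $z$-dependent decomposition that collapses every cone problem to a single four-dimensional one.
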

\begin{proof}
The novel part here is the uniformity. We will show that along any ray pointing out of the origin, we get some absolute length of analytic continuation. The key is to reduce to a $4$ dimensional problem.

First we note that any $z \in \mathbb{C}^n$
such that $\|z\| < 1/8$
can be written
$z=  x^+ - x^- + iy^+ - iy^-$
where each of the components are in $\overline{C}$ have norm
less than $1/4$, namely
$x^+ =  x + \|z\|, x^- = \|z\| - x$ etc.
Now,
consider $g(a,b,c,d) = f((a x^+ bx^- + cy^- + dy^+)/2)$
which defines a continuous function on
$\Pi^4 \cup (-1,0)^4\cup \tilde{B} \cup (-1,0)^4 \cup -\Pi^4$
which is analytic on $\Pi^4 \cup -\Pi^4.$ So applying the wedge of the edge theorem we get some small analytic continuation along the ray spanned by $z$ of  independent size and we are done.
\end{proof}



\subsection{Conjecture}
We note that the wedge-of-the-edge theorem is much stronger in the case of rational functions where it is the case that any analytic function on $\Pi^n \cup -\Pi^n$ which extends continuously through a neighborhood of a line segment from $(-1, \ldots, -1)$ to $(1,\ldots, 1)$
must analytically continue to the whole square $(-1,1)^n.$

Roughly speaking, fix $r(z)=p(z) / q(z)$ a rational function is $n$ variables which is reduced.
The singular set is then the set where $V = \{q(z) = 0\}.$
If $r(z)$ is analytic on $\Pi^n \cup -\Pi^n$ we know that
$V$ cannot intersect $\Pi^n \cup -\Pi^n,$ so by an argument involveing the inverse function theorem we get that $\nabla q (x)$
cannot be orthogonal to any positive directions at points $x\in V$, and therefore
$\nabla q(x)$ must have all positive coordinates. A geometric argument then gives the original claim. For a more detailed understanding of such varieties, which has been rapidly developed in recent years, see \cite{agmcdv, ams06, amhyp, kn08ua, BPS1}.

We are led to conjecture the following.
\begin{conjecture}
Let $U$ contain the line segment from $(-1, \ldots, -1)$ to $(1,\ldots, 1).$
Any continuous function $f:\Pi^n \cup U \cup -\Pi^n$
which is analytic on $\Pi^n \cup -\Pi^{n}$ analytically continues to $(-1,1)^n.$
\end{conjecture}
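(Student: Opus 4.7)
The plan is to combine the wedge-of-the-edge theorem with a slicing argument in the spirit of the envelope of holomorphy. First I would apply Theorem \ref{wedgecubes} at each point $p=s\mathbf 1$ of the open diagonal: since $U$ contains a real neighborhood of $p$, after translating and rescaling $f$ meets the hypotheses of the theorem at $p$, giving an analytic extension to a complex ball about $p$. By compactness of the closed subinterval $[-1+\delta,1-\delta]\mathbf 1$ of the diagonal for each $\delta>0$, and by the uniform version from Subsection \ref{uniformity}, the union of these balls forms an open tubular neighborhood $\Omega_0\subseteq\mathbb C^n$ of the open diagonal. Thus $f$ is analytic on $\Omega:=\Pi^n\cup\Omega_0\cup(-\Pi^n)$, and the remaining task is to show that every $x\in(-1,1)^n$ lies in the envelope of holomorphy of $\Omega$.

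For a fixed $x\in(-1,1)^n$, I would try to exploit the one-variable edge-of-the-wedge by slicing along a complex line from $x$ to the diagonal. Choose $s\in(\max_i x_i,1)$, set $w=s\mathbf 1-x\in(0,\infty)^n$, and form $L=\{x+tw:t\in\mathbb C\}$. Since $w$ has strictly positive coordinates, $L\cap\Pi^n=\{\Im t>0\}$ and $L\cap(-\Pi^n)=\{\Im t<0\}$, while the diagonal point $x+w=s\mathbf 1$ lies in $\Omega_0$, so $f\vert_L$ is also analytic in a complex disk around $t=1$. The one-variable edge-of-the-wedge then extends $f\vert_L$ to all of $\mathbb C$ minus two real rays that stay away from a neighborhood of $t=1$.

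The main obstacle is that $x$ corresponds to $t=0$, which sits on one of those excluded rays whenever $x$ lies strictly off the diagonal, so a single line cannot on its own remove the singularity at $x$. Bridging this gap requires a genuinely multivariable input. One natural route is to let $w$ vary continuously within $(0,\infty)^n$ so that $x+w$ traces out a region of the diagonal: the resulting family of one-dimensional continuations should glue into a holomorphic function on a pseudoconvex hull, which by a Hartogs-type fill-in ought to contain a full complex neighborhood of $x$. A second route is a bootstrap of Theorem \ref{wedgefull}: the real slice $\Omega_0\cap\mathbb R^n$ is a genuine open neighborhood of the diagonal in $\mathbb R^n$, so Theorem \ref{wedgefull} can be reapplied at points $q$ off the diagonal but in this enlarged real domain, with wedges shaped by the current domain, and one iterates outward. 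In either strategy the delicate point is to control the radius of extension so that it does not degenerate before $(-1,1)^n$ is covered, echoing the algebraic tangent constraints that do the work in the rational case; I would first try to carry out one of these strategies in detail for $n=2$, where $\Omega\subseteq\mathbb C^2$ can be visualized and its envelope described concretely.
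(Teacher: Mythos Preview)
The statement you are attempting to prove is labelled a \emph{conjecture} in the paper; the paper does not supply a proof, only motivation via the rational case and the remark that the question is open.  So there is no ``paper's own proof'' to compare against, and your write-up is, as you yourself flag, a plan with an acknowledged gap rather than a proof.

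A few concrete issues with the plan itself.  First, your opening step misapplies Theorem~\ref{wedgecubes}: that theorem needs the function to be defined on the two macroscopic cubes $(-1,\varepsilon)^d$ and $(-\varepsilon,1)^d$, whereas at a point $p=s\mathbf 1$ on the diagonal the hypothesis of the conjecture gives you only a small real neighborhood coming from $U$.  The tubular complex neighborhood $\Omega_0$ of the open diagonal that you want already follows from the classical edge-of-the-wedge theorem applied to the open set $U$; the wedge-of-the-edge theorem adds nothing at that stage.  Second, your bootstrap route is precisely the kind of iteration whose success the paper explicitly says it cannot establish: the size of $D$ in Theorem~\ref{wedgefull} depends on the real wedge $W$, and as you move off the diagonal the available wedges shrink, so the gains may sum to something strictly smaller than $(-1,1)^n$.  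The paper says outright that ``we have not eliminated the possibility that $D$ cannot be taken to contain the whole $(-1,1)^d$'' even in the simplest cube situation, which is exactly the obstacle your iteration would have to overcome.  Third, the Hartogs-type route is only a hope: identifying the envelope of holomorphy of $\Pi^n\cup\Omega_0\cup(-\Pi^n)$ is essentially the content of the conjecture, and nothing in the proposal indicates why the envelope should swallow points like $t=0$ on your complex lines.  In short, the proposal restates the difficulty rather than resolving it, and the paper does not claim to resolve it either.
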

We note that in later parts of \cite{amyloew}, they give a stratagem to show that local matrix monotonicity implies global matrix monotonicity by rational approximation schemes which, in principle, would need the conjecture above to be true.

\printindex
\bibliography{references}
\bibliographystyle{plain}

\end{document}